\documentclass[10pt,reqno]{amsart}
\usepackage[T2A]{fontenc}
\usepackage{amsmath}
\usepackage{amssymb}
\usepackage{amsfonts}
\usepackage{stmaryrd} 

\newtheorem{thm}{Theorem}
\newtheorem{prop}[thm]{Proposition}

\newtheorem{cor}[thm]{Corollary}

\begin{document}

\title[On the behavior of $\pi$-submaximal subgroups]{On the behavior of $\pi$-submaximal subgroups\\ under homomorphisms}
\author{Danila O. Revin}%
\address{Danila O. Revin
\newline\hphantom{iii} Sobolev Institute of Mathematics,
\newline\hphantom{iii} 4, Koptyug av.
\newline\hphantom{iii} 630090, Novosibirsk, Russia
} \email{revin@math.nsc.ru}

\author{Andrei V. Zavarnitsine}%
\address{Andrei V. Zavarnitsine
\newline\hphantom{iii} Sobolev Institute of Mathematics,
\newline\hphantom{iii} 4, Koptyug av.
\newline\hphantom{iii} 630090, Novosibirsk, Russia
} \email{zav@math.nsc.ru}

\maketitle {\small
\begin{quote}
\noindent{{\sc Abstract.} We construct examples that show the difference in behavior or $\pi$-maximal and $\pi$-submaximal subgroups under group homomorphisms.}

\medskip
\noindent{{\sc Keywords:} $\pi$-maximal subgroup, $\pi$-submaximal subgroup, minimal nonsolvable group}

\medskip
\noindent{\sc MSC2010:
20E28, 
20E34, 
20D20  
}
\end{quote}
}
\section{Introduction}

\thispagestyle{empty}

Let $\pi$ be a set of primes and let $G$ be a finite group. A subgroup $H\leqslant G$ is a {\em $\pi$-subgroup} if every prime divisor of $|H|$ is contained in~$\pi$. We
say that $H$ is {\em $\pi$-maximal}, if it is a $\pi$-subgroup and is not contained in a strictly larger $\pi$-subgroup. In the particular case $\pi=\{p\}$, the
$\pi$-maximal subgroups are precisely the $p$-Sylow subgroups.

The $\pi$-maximal subgroups have been extensively studied in the literature, see, for example, \cite{18GuoRev,18GuoRev1,18GuoRev2,94Wie511,03She,67Wie,80Wie,94Wie607}, \cite[Ch.~5, \S~3]{86Suz}. The most
well-studied are the $\pi$-maximal subgroups in solvable groups. According to P.\,Hall's classical theorem \cite{28Hall}, \cite[Ch.~4, Theorem~5.6]{86Suz}, every two
$\pi$-maximal subgroups of a solvable group $G$ are conjugate, or, equivalently, a complete analog of Sylow's theorem for $\pi$-subgroups holds. More precisely, a
solvable group $G$ has a $\pi$-subgroup whose index is not divisible by the primes in $\pi$ (such subgroups are called {\em $\pi$-Hall} and they are necessarily
$\pi$-maximal), every two $\pi$-Hall subgroups of $G$ are conjugate, and every $\pi$-subgroup is contained in a $\pi$-Hall subgroup. For nonsolvable group, the analog of
Hall's theorem does not hold. However, the study of $\pi$-Hall subgroups in nonsolvable groups is substantially facilitated by the fact that if $A$ is a normal subgroup
of $G$ then, for every $\pi$-Hall subgroup $H$ of $G$, the subgroups $H\cap A$ and $HA/A$ are $\pi$-Hall in $A$ and $G/A$, respectively \cite[Ch.~4, (5.11)]{86Suz}.

The principal difficulty in the study of $\pi$-maximal subgroups of nonsolvable groups is that the $\pi$-maximality agrees poorly with the normal structure of the group.
For example \cite[Example 2, p. 170]{86Suz}, a $2$-Sylow subgroup $H$ in $G=\operatorname{PGL}_2(7)$ is maximal, see \cite{85Atlas}, hence is $\{2,3\}$-maximal. However,
the normal subgroup $A=\operatorname{PSL}_2(7)\cong \operatorname{GL}_3(2)$ of order $2^3\cdot3\cdot7$ in $G$ is such that the intersection $H\cap A$ is a $2$-Sylow
subgroup of $A$ (and  $|H\cap A|=2^3$), but is not a $\{2,3\}$-maximal subgroup of $A$, because the stabilizers in $\operatorname{GL}_3(2)$ of a line and a plane of
the natural $3$-dimensional module both have order $2^3\cdot3$, i.\,e. are $\{2,3\}$-subgroups, and $H\cap A$ is conjugate to a $2$-Sylow subgroup of each of the
stabilizers by Sylow's theorem.

Homomorphic images of $\pi$-maximal subgroups exhibit an even more irregular behavior which we will talk about below. As far as intersections of $\pi$-maximal and normal
subgroups are concerned, not every $\pi$-subgroup of a normal subgroup $A$ can be such an intersection. An obstruction here is the Wielandt--Hartley theorem
\cite[Hauptsatz~13.2]{94Wie607}, \cite[Lemmas 2 and~3]{71Har}, \cite[Ch.~5, (3.20)]{86Suz} which states that if $H$ is a $\pi$-maximal subgroup of $G$ and
$A\trianglelefteqslant G$ then the order of $N_A(H\cap A)/(H\cap A)$ is not divisible by the primes in $\pi$; in particular, $H\cap A\ne 1$ if $|A|$ is divisible by at
least one prime in $\pi$. Later Wielandt announced\footnote{Unfortunately, a proof of this analog has not been published to this day.
In 2003, Shemetkov obtained the following weaker fact \cite[Theorem 7]{03She}: if $A\trianglelefteqslant\trianglelefteqslant G$
and $H$ is $\pi$-maximal in $G$ then $H\cap A\ne 1$ if $|A|$ is divisible by at
least one prime in $\pi$. Recently, Skresanov has found a proof of Wielandt's claim in its entire form \cite[5.4(a)]{80Wie} and
is currently preparing it for publication.}\label{skr}
an analog \cite[5.4(a)]{80Wie} of this theorem for the case where $A\trianglelefteqslant\trianglelefteqslant G$. It
allowed him to introduce the new concept of a $\pi$-submaximal subgroup.

According to Wielandt \cite[Definition on p.~170]{80Wie}, we say that $H$ is {\em $\pi$-submaximal} in $G$ if there exists a finite group $G^*$ such that
$G\trianglelefteqslant \trianglelefteqslant G^*$, i.\,e. $G$ can be embedded into $G^*$ as a subnormal subgroup, and $H=G\cap K$ for some $\pi$-maximal subgroup $K$ of
$G^*$.

Clearly, every $\pi$-maximal subgroup is $\pi$-submaximal, but the converse does not hold in general. For example, a $2$-Sylow subgroup
of $\operatorname{GL}_3(2)$ is $\{2,3\}$-submaximal, but not $\{2,3\}$-maximal, see above. Unlike $\pi$-maximal subgroups,
$\pi$-submaxi\-mal ones have the following inductive property which resembles that of
$\pi$-Hall subgroups: if $A\trianglelefteqslant \trianglelefteqslant G$ then $H\cap A$ is $\pi$-submaximal in $A$ for every $\pi$-submaximal subgroup $H$ in $G$. The
Wielandt--Hartley theorem can be equivalently reformulated as follows: if $H$ is a $\pi$-submaximal subgroup of $G$ then $|N_G(H)/H|$ is not divisible by the primes in
$\pi$ \cite[5.4(a)]{80Wie}.

The concept of a $\pi$-submaximal subgroup proved to be quite useful. For example, the above-mentioned inductive property, along with the Wielandt--Hartley theorem,
shows that every $\pi$-submaximal (and, consequently, every $\pi$-maximal) subgroup of $G$ is uniquely, up to conjugacy, determined by its projections on the quotients
of any subnormal series of $G$ \cite[5.4(c)]{80Wie} (cf.~\cite[Ch.~5, Theorem~3.21]{86Suz}). More detailed information on $\pi$-submaximal subgroups can be found
in~\cite{18GuoRev1, 18GuoRev2}.

In this paper, we compare the behavior of $\pi$-maximal and $\pi$-submaximal subgroups under group homomorphisms.

Wielandt \cite[(14.2)]{94Wie607}, \cite[4.2]{80Wie} pointed out quite a general construction showing that if $\phi:G\rightarrow G_1$ is a homomorphism of groups and $H$
is a $\pi$-maximal subgroup of $G$, then $H^\phi$ is in general not $\pi$-maximal in $G^\phi$. Suppose that $X$ is a finite group that has more than one conjugacy class
of $\pi$-maximal subgroups, and let $Y$ be an arbitrary finite group. We consider the regular wreath product $X\wr Y$ and the natural epimorphism $G\rightarrow Y$.
Then every (not just $\pi$-maximal) $\pi$-subgroup of $Y=G^\phi$ coincides with $H^\phi$ for some $\pi$-maximal subgroup $H$ of $G$.

However, there exists a group $A$ with the following property: if some group $G$ contains a normal subgroup $N$ isomorphic to $A$ then the image $HN/H$ of every
$\pi$-maximal subgroup $H$ is $\pi$-maximal in $G/N$. Such is every $\pi$-group \cite[(12.4)]{94Wie607}, every $\pi'$-group \cite[(12.7)]{94Wie607}, where $\pi'$ is the
complement to $\pi$ in the set of all primes, and also (by induction) every $\pi$-separable group, i.\,e. a group having a subnormal series whose every factor is either
a $\pi$- or a $\pi'$-group, see. \cite[(12.9)]{94Wie607}, \cite[4.3]{80Wie}. In other words,
\begin{itemize}
 \item[$(*)$] {\it if $\phi:G\rightarrow G_1$ is a homomorphism of groups whose kernel is $\pi$-separable then $H^\phi$ is a $\pi$-maximal subgroup in $G^\phi$ for
     every $\pi$-maximal subgroup $H$ in $G$.}
\end{itemize}

An important remark about the behaviour of $\pi$-maximal subgroups under homomorphisms is the following consequence of \cite[Kap. III, Satz 3.8 and Satz 3.9]{67Hup}:
\begin{itemize}
 \item[$(**)$] {\it if $\phi:G\rightarrow G_1$ is a homomorphism of groups then every $\pi$-maximal subgroup in $G^\phi$ is the image under $\phi$ of some
     $\pi$-maximal subgroup of $G$.}
\end{itemize}

Properties $(*)$ and $(**)$ imply that if $\phi:G\rightarrow G_1$ is a homomorphism of groups whose kernel is $\pi$-separable then $H\mapsto H^\phi$ maps surjectively
the set of $\pi$-maximal subgroups of $G$ to the similar set of $G^\phi$ and induces a bijection between the conjugacy classes of $\pi$-maximal subgroups of $G$ and
$G^\phi$.

Do the $\pi$-submaximal subgroups have similar properties? In \cite[Proposition~9]{18GuoRev1}, it was proved that under a group homomorphism $\phi:G\rightarrow G_1$ the
image of a $\pi$-submaximal subgroup of $G$ is a $\pi$-submaximal subgroup of $G^\phi$ if the kernel of $\phi$ coincides with the $\pi$-radical, $\pi'$-radical,
nilpotent radical of $G$, and, more broadly, with the radical $G_\mathfrak{F}$, where $\mathfrak{F}$ is a Fitting class that consists of groups whose all maximal
$\pi$-subgroups are conjugate\footnote{Recall that a {\em Fitting class}  $\mathfrak{F}$ is a class of finite groups which contains with every group all its normal
subgroups, and if some group $G$ is the product of two of its normal subgroups belonging to $\mathfrak{F}$ then $G$ itself belongs to $\mathfrak{F}$. If $\mathfrak{F}$
is a Fitting class then every finite group has an {\em $\mathfrak{F}$-radical $G_\mathfrak{F}$}, i.\,e. the largest normal $\mathfrak{F}$-subgroup, which coincides with
the product of all normal $\mathfrak{F}$-subgroups of~$G$. Solvable groups, nilpotent groups, $\pi$-groups, $\pi'$-groups, $\pi$-separable groups are examples of Fitting
classes that consist of groups whose all maximal $\pi$-subgroups are conjugate.}. This property plays an important role in~\cite{18GuoRev}. H. Wielandt
posed \cite[Question~(g)]{80Wie} the problem of classifying $\pi$-submaximal subgroups in minimal nonsolvable groups\footnote{Recall that a group is {\em minimal
nonsolvable} if it is not solvable, but all its proper subgroups are solvable. If, additionally, the group is simple then it is {\em minimal simple}.}. Such a
classification for the minimal simple groups was obtained in \cite{18GuoRev}. The quotient $L=G/\Phi(G)$ of a minimal nonsolvable group $G$ by its Frattini subgroup
$\Phi(G)$ (which in this case coincides with the nilpotent radical) is a minimal simple group. As a consequence, the image of every $\pi$-submaximal subgroup of $G$
under the canonical epimorphism $G\to L$ is a $\pi$-submaximal subgroup of $L$~\cite[Proposition~1.2]{18GuoRev}. For every minimal simple group (the list of which was
obtained by J.Thompson \cite{68Tho}), all $\pi$-submaximal subgroups are found in~\cite[Tables~1--11]{18GuoRev}. The question of whether an analog of $(*)$ is true
remains: is a $\pi$-submaximal subgroup of~$L$ the image of some $\pi$-submaximal subgroup of~$G$? Note that most $\pi$-submaximal subgroups of $L$ are maximal
(see~\cite[Tables~1--11]{18GuoRev}), hence the above question is answered in the affirmative for such subgroups due to~$(*)$.

In the present paper, we construct examples showing that analogs of $(*)$ and $(**)$ for $\pi$-submaximal subgroups do not hold even in the case where the kernel of $\phi$ is
an abelian $\pi$-group. A counterexample $G$ to the analog of $(**)$ will be a minimal nonsolvable group and the corresponding homomorphism $\phi$ will be the canonical
epimorphism $G\rightarrow L=G/\Phi(G)$. We will therefore also construct an example of a $\pi$-submaximal subgroup in a minimal simple group $L$ that cannot be lifted to a
$\pi$-submaximal subgroup of some minimal nonsolvable group covering $L$.

\section{Preliminaries}

We will require the following result.

\begin{prop}\label{main}
Let $G$ be a finite group, let $\pi$ be a set of primes, and let $V$ be a unique minimal normal subgroup of $G$. Assume that $V$ is a $p$-group for $p\in \pi$,
$V\nleqslant \operatorname{Z}(G)$, and $L=G/V$ is nonabelian simple. Let $H$ be a $\pi$-submaximal subgroup of $G$ and let $G^*$ be a finite group of minimal order such
that $G\trianglelefteqslant \trianglelefteqslant G^*$ and there is a $\pi$-maximal subgroup $K$ of $G^*$ with $H=G \cap K$. Then $G\trianglelefteqslant G^*$ and
$C_{G^*}(G)=1$.
\end{prop}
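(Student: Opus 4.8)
The plan is to first pin down the internal structure of $G$, then use the minimality of $G^*$ only through the two operations that preserve $\pi$-maximality of $K$, and finally read off both conclusions from the structure of $G^*/O_p(G^*)$. As the unique minimal normal subgroup, $V$ is characteristic in $G$, and being a minimal normal $p$-group it is elementary abelian. The subgroup $C_G(V)$ is normal in $G$ and contains $V$; since $V\nleqslant\operatorname{Z}(G)$ it is proper, so $C_G(V)/V$ is a proper normal subgroup of the simple group $L=G/V$ and hence is trivial. Thus $C_G(V)=V$. Consequently $\operatorname{Z}(G)\leqslant C_G(V)=V$, and if $\operatorname{Z}(G)\ne1$ then $V\leqslant\operatorname{Z}(G)$, a contradiction; hence $\operatorname{Z}(G)=1$. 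The same argument applied to normal $p$-subgroups yields $O_p(G)=V$.

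Next I would extract what the minimality of $|G^*|$ gives. A $\pi$-subgroup that is $\pi$-maximal in $G^*$ stays $\pi$-maximal in every subgroup lying between it and $G^*$; since $G$ is subnormal in $\langle G,K\rangle$ and $K$ is $\pi$-maximal there with $G\cap K=H$, minimality forces $G^*=\langle G,K\rangle$. Because $V$ is a characteristic, hence subnormal, $p$-subgroup of $G^*$, it lies in $P:=O_p(G^*)$, and as $P\leqslant O_\pi(G^*)\leqslant K$ we obtain $V\leqslant K$, whence $V\leqslant H$. Moreover $P\cap G$ is a normal $p$-subgroup of $G$, so $P\cap G=O_p(G)=V$.

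Both conclusions can then be phrased as the single assertion that $G^*$ embeds into $\operatorname{Aut}(G)$ with $G$ realized as the inner automorphisms. To approach this I would pass to $\overline{G^*}=G^*/P$, in which $\overline G\cong G/V=L$ is a nonabelian simple subnormal subgroup, i.e.\ a component. Writing $C=C_{G^*}(G)$, one has $C\cap G=\operatorname{Z}(G)=1$ and $G$ normalizes $C$, so $GC=G\times C$; dually, any component conjugate to $\overline G$ but distinct from it would centralize $\overline G$. Thus the pair of statements $G\trianglelefteqslant G^*$ and $C_{G^*}(G)=1$ reduces to showing that $\overline G$ is the unique component of $\overline{G^*}$ and that $C_{\overline{G^*}}(\overline G)=1$; once this is known, the equalities $P\cap G=V$, $C_G(V)=V$ and $\operatorname{Z}(G)=1$ should let one descend normality and triviality of the centralizer from $\overline G$ and $GP$ back to $G$.

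The main obstacle is exactly this control of the layer of $\overline{G^*}$ by means of minimality. Subnormality only exhibits $\overline G$ as one component, a priori with conjugates and a nontrivial centralizer, and the admissible reductions are limited: I may pass to a subgroup containing both $G$ and $K$, or, by $(*)$, to a quotient by a $\pi$-separable normal subgroup, but I may \emph{not} intersect $K$ with a normal subgroup, since that produces only a $\pi$-submaximal and not a $\pi$-maximal subgroup and so cannot shrink $G^*$ directly. I therefore expect the decisive input to combine the Wielandt--Hartley theorem applied to $K$ against suitable normal subgroups of $G^*$ with the self-centralizing property $C_G(V)=V$, together with a core-and-quotient argument: taking the core $C_0$ of $C$ in $G^*$, noting $C_0\cap G=1$, and invoking $(*)$ to contradict minimality whenever $C_0$ is $\pi$-separable. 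Making these ingredients eliminate the extra components and simultaneously force $C_0=1$, and then effecting the descent from $GP$ to $G$, is where I anticipate the real difficulty.
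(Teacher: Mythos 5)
Your preliminary reductions are all correct and agree with facts the paper also establishes: $C_G(V)=V$, $\operatorname{Z}(G)=1$, $O_p(G)=V$, the equality $G^*=\langle G,K\rangle$, the chain $V\leqslant O_p(G^*)\leqslant O_\pi(G^*)\leqslant K$, and $O_p(G^*)\cap G=V$. But everything after that is a statement of intent, not a proof: eliminating the components of $G^*/O_p(G^*)$ other than $\overline G$, proving $C_{G^*}(G)=1$, and even the final ``descent'' from $GP$ back to $G$ are precisely the content of the proposition, and you defer all of them (``where I anticipate the real difficulty''). Moreover, the tools you name will not close the gap as described. The core $C_0$ of $C_{G^*}(G)$ in $G^*$ has no reason to be $\pi$-separable (it may involve arbitrary nonabelian simple sections), so $(*)$ cannot be applied to $G^*\to G^*/C_0$; and the Wielandt--Hartley theorem gives only the arithmetic statement that $|N_A(K\cap A)/(K\cap A)|$ is a $\pi'$-number for $A\trianglelefteqslant\trianglelefteqslant G^*$, which by itself eliminates neither extra components nor a centralizer.

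For comparison, the paper's proof converts minimality into the two conclusions by two moves, both absent from your outline. First, with $W=\langle V^g\mid g\in G^*\rangle\leqslant K$ and $X=\langle G^g\mid g\in G^*\rangle$, it shows every minimal normal subgroup $U$ of $G^*$ is a $\pi$-group, hence lies in $K$; then, if $U\cap G=1$, the quotient $G^*/U$ inherits the whole configuration --- $G$ embeds subnormally, $K/U$ is $\pi$-maximal because $U\leqslant K$ (note: no $\pi$-separability is needed here, only $U\leqslant K$), and $GU\cap K=HU$ --- contradicting minimality; consequently every minimal normal subgroup contains $V$, and $W$ is the unique one. Second, and this is the decisive mechanism your plan lacks: put $G^0=N_{G^*}(GW)$ and $N=N_K(GW)$, so that $H=N\cap G$ and $G^0=NX$, and choose a maximal $\pi$-subgroup $M$ of $G^0$ containing $N$. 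A right transversal $1=g_1,\dots,g_m$ of $N$ in $K$ is also a right transversal of $G^0$ in $G^*$; the conjugates $M_i=(M\cap GW)^{g_i}$ are permuted by $K$ and lie over $W$ in distinct components of $X/W$, hence commute pairwise modulo $W$, so $M_X=\langle M_1,\dots,M_m\rangle$ is a $\pi$-subgroup normalized by $K$, forcing $M_X\leqslant K$ and then $H=M\cap G$. Thus the pair $(G^0,M)$ is a new witness to the $\pi$-submaximality of $H$, and minimality yields $G^*=G^0$, i.e. $GW\trianglelefteqslant G^*$ and $X=GW$; Clifford's theorem ($W=VC_W(G)$) combined with $C_{G^*}(X)=1$ then gives $W=V$ and $X=G$, which is exactly the conclusion. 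Without some substitute for this normalizer-plus-transversal construction --- a way to shrink the witness $(G^*,K)$ without destroying $\pi$-maximality --- your obstacle (``I may not intersect $K$ with a normal subgroup'') remains unresolved and the proof does not go through.
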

\begin{proof} Denote $W=\langle V^g \mid g \in G^* \rangle$, i.\,e.
the normal closure of $V$ in $G^*$, and let $\overline{\phantom{a}}:G^*\to G^*/W$ be the canonical epimorphism. Note that $W$ is a $p$-group as a subgroup generated by
subnormal $p$-subgroups, see \cite[Satz 6.5(a)]{94Wie413}. Also, $W\leqslant K$, since $p\in \pi$ and $K$ is a $\pi$-maximal subgroup. Moreover, $W\cap G=V$ due to the
structure of $G$.

Denote $X=\langle G^g \mid g \in G^* \rangle$. By the minimality of $G^*$, we have $G^*=KX$. Note that $\overline{X}$ is a minimal normal subgroup of $\overline{G^*}$,
because it is the normal closure in $\overline{G^*}$ of the simple subnormal subgroup $\overline{G}\cong L$. This also yields $GW\trianglelefteqslant X$.

We show that every minimal normal subgroup of $G^*$ is a $\pi$-group and is therefore contained in $K$. Let $U$ be a minimal normal subgroup of $G^*$. If $U\leqslant W$,
the claim follows, since $W$ is a $p$-group. Otherwise, $U\cap W=1$ and $\overline U\cong U$. Then $\overline U$ is a minimal normal subgroup of $\overline{G^*}$.
\big(Indeed, if $M \leqslant U$  is such that $\overline{M}\trianglelefteqslant \overline{G^*}$ then
$$[M,G^*]\leqslant U \cap MW=M(U\cap W)=M,$$
i.\,e. $M\trianglelefteqslant G^*$, which implies $M=U$ or $M=1$.\big) It follows that either $\overline{U}=\overline{X}$ or $\overline{U}\cap \overline{X}=1$. The
former case, however, is impossible, since we would have in this case $X=UW$ and $[U,W]\leqslant U\cap W=1$, i.\,e. $X\cong W\times U$ which would contain no subgroup
isomorphic to $G$. Hence, the latter case holds, and because $G^*=KX$, we have $U\cong \overline{U}\leqslant \overline{K}$ is a $\pi$-group as claimed.

We now show that $W$ is the unique minimal normal subgroup of $G^*$. Again, let $U$ be a minimal normal subgroup of $G^*$. Then $U\leqslant K$ by the above. Suppose that
$U\cap V=1$. Then $U\cap G=1$ as $V$ is the unique proper normal subgroup of $G$. Let $\widetilde{\phantom{a}}:G^*\to G^*/U$ be the canonical epimorphism. We have
$G\cong \widetilde{G}\trianglelefteqslant\trianglelefteqslant \widetilde{G^*}$ and $\widetilde{K}$ is a $\pi$-maximal subgroup of $\widetilde{G^*}$, since $U\leqslant
K$. We have $GU\cap K=(G\cap K)U=HU$, and so $\widetilde{G}\cap\widetilde{K}=\widetilde{H}$. The minimality of $G^*$ excludes this possibility. Therefore, $U\cap V\ne 1$
and thus $V\leqslant U$, because $V$ is minimal normal in $G$. But then $W=\langle V^g \mid g \in G^* \rangle\leqslant U$, which gives $W=U$. This proves the uniqueness
of $W$ as a minimal normal subgroup of $G^*$.

We have $W=VC_W(G)$. Indeed, Clifford's theorem and the subnormality of $G$ in $G^*$ imply that $W$ is a completely reducible $\mathbb{F}_pG$-module. If $U$ is an
irreducible submodule of $W$ in and $U$ is not contained in $C_W(G)$ then
$$
U=[U,G]\leqslant U\cap G\leqslant W\cap G = V,
$$
where we have used the fact that $W$ normalizes $G$ by \cite[Theorem 2.6]{08Isa}. This implies that $W=VC_W(G)$.

We also have $C_{G^*}(X)=1$. Indeed, assuming the contrary we have $W\leqslant C_{G^*}(X)\trianglelefteqslant G^*$, since $W$ is the unique minimal normal subgroup of
$G^*$. Therefore, $V=W\cap G\leqslant C_G(G)=\operatorname{Z}(G)$ which contradicts the assumption that $V\nleqslant\operatorname{Z}(G)$.

Henceforth, denote $N=N_K(GW)$ and $G^0=N_{G^*}(GW)$.

We have $G^0=NX$. Indeed, $N\leqslant G^0$ by definition, and $X\leqslant G^0$, because $GW\trianglelefteqslant X$. Conversely, $G^0 \leqslant NX$, since $G^*=KX$.

We also have $H=N\cap G$. Indeed, $N\cap G\leqslant K\cap G=H$. Also, $H$ normalizes both $G$ and $W$, hence $H\leqslant N_K(GW)=N$.

We now show that $H=M\cap G$ for some $\pi$-maximal subgroup $M$ of $G^0$. Being a $\pi$-group, $N$ is contained in a maximal $\pi$-subgroup, say $M$, of $G^0$. We have
$G^0=NX=MX$ by the above. Let $1=g_1,\ldots,g_m$ be a right transversal of $N$ in $K$. This will also be a right transversal of $G^0$ in $G^*$, since $G^0=NX$, $G^*=KX$,
and $N\cap X=K\cap X$. Denote $M_i=(M\cap GW)^{g_i}$. For every $g\in K$, there exist $\sigma\in \operatorname{Sym}_m$ and elements $t_1,\ldots, t_m\in N$ such that
$g_ig=t_ig_{i\sigma}$. Therefore,
$$
M_i^g=(M\cap GW)^{g_ig}=(M\cap GW)^{t_ig_{i\sigma}}=(M\cap
GW)^{g_{i\sigma}}=M_{i\sigma},
$$
where we have used the fact that $t_i$ normalizes both $M$ and $GW$, since $t_i\in N$. It follows that $K$ normalizes the subgroup $M_X=\langle M_i\mid i=1,\ldots,
m\rangle$.

Since $W\leqslant K$, we have $W\leqslant N\leqslant M$ and $W\leqslant (M\cap GW)^{g_i}=M_i$ for every $i$. The fact that $g_1,\ldots,g_m$ form a right transversal of
$G^0$ in $G^*$ implies that the factors $M_i/W$ lie in distinct components of the minimal normal subgroup $X/W$ of $G^*/W$, and therefore, pairwise commute.
Consequently, $M_X/W$ is a $\pi$-group, hence so is $M_X$. The maximality of $K$ implies that $M_X\leqslant K$. On the other hand,
$$
M\cap G\leqslant M\cap GW = M_1\leqslant M_X\leqslant K.
$$
Now, we have $H=N\cap G\leqslant M\cap G\leqslant K\cap G=H$, and so $H=M\cap G$ as claimed.

In view of the minimality of $G^*$, we have $G^*=G^0$, $GW\trianglelefteqslant G^*$, $X=GW$, and $X/W\cong L$. It follows that $C_W(G)=C_W(X)\leqslant C_{G^*}(X)$. But
we showed that $C_{G^*}(X)=1$ and $W=VC_W(G)$. Consequently, $W=V$ and $X=G$. This implies $G\trianglelefteqslant G^*$. \end{proof}

\section{Examples}

In this section, we give two examples showing that $\pi$-submaximality is generally not preserved under homomorphic images or preimages. The following consequence of
Proposition~\ref{main} will be used.

\begin{cor}\label{crl}
Let $\pi=\{2,3\}$, let $L=\operatorname{GL}_3(2)$, and let $V$ be an elementary abelian group of order $8$. Suppose that $G$ is an upward extension of $V$ by $L$ such
that the conjugation action of $G$ induces on $V$ the natural $\mathbb{F}_2L$-module structure. Then no $2$-Sylow subgroup of $G$ is $\pi$-submaximal.
\end{cor}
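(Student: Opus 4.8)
The plan is to argue by contradiction, using Proposition~\ref{main} to pin down the structure of a minimal overgroup and then to reduce modulo $V$ to the motivating example of the introduction. First I would record that the hypotheses of Proposition~\ref{main} hold: since $L=\operatorname{GL}_3(2)$ is simple and acts irreducibly and nontrivially on the natural module $V$, any nontrivial normal subgroup of $G$ meets $V$ nontrivially and hence contains $V$, so $V$ is the unique minimal normal subgroup of $G$; it is a $2$-group with $2\in\pi$, it is noncentral because the module is nontrivial, and $G/V=L$ is nonabelian simple. A Sylow $2$-subgroup $H$ of $G$ has order $2^6$ and is a $\pi$-subgroup.

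Assume, for contradiction, that $H$ is $\pi$-submaximal, and let $G^*$ be a group of minimal order witnessing this, with $K$ the corresponding $\pi$-maximal subgroup and $H=G\cap K$. By Proposition~\ref{main} we have $G\trianglelefteqslant G^*$ and $C_{G^*}(G)=1$; in particular $\operatorname{Z}(G)\leqslant C_{G^*}(G)=1$, so $G\cong\operatorname{Inn}(G)$ and $G\leqslant G^*\leqslant\operatorname{Aut}(G)$.

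The heart of the argument is to identify the possible $G^*$. I would consider the two homomorphisms $\sigma\colon\operatorname{Aut}(G)\to\operatorname{Aut}(V)=\operatorname{GL}(V)$ and $\rho\colon\operatorname{Aut}(G)\to\operatorname{Aut}(L)$ obtained by restricting to the characteristic subgroup $V$ and to the quotient $G/V=L$, respectively. Comparing the two ways of computing $\alpha(gvg^{-1})$ for $\alpha\in\operatorname{Aut}(G)$, $g\in G$, $v\in V$ yields the relation $\rho(\alpha)(\bar g)=\sigma(\alpha)\,\bar g\,\sigma(\alpha)^{-1}$ in $\operatorname{GL}(V)$, where $\bar g$ denotes the image of $g$ in $L=\operatorname{GL}(V)$. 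The decisive point, and the reason the example uses $\operatorname{GL}_3(2)$ together with its natural module, is that here $L$ is the \emph{full} group $\operatorname{GL}(V)$ with trivial center; hence this relation shows that $\rho(\alpha)$ is the inner automorphism of $L$ induced by $\sigma(\alpha)$, so $\rho(\operatorname{Aut}(G))\leqslant\operatorname{Inn}(L)$. Setting $R=\ker\rho\cap G^*\trianglelefteqslant G^*$, I then get $G^*=GR$ and $G\cap R=V$ (using $\operatorname{Z}(L)=1$); moreover, for $\alpha\in R$ the relation forces $\sigma(\alpha)\in\operatorname{Z}(\operatorname{GL}(V))=1$, so every element of $R$ fixes $V$ pointwise and acts trivially on $G/V$. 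Such automorphisms compose by adding their associated cocycles $G\to V$, so $R$ is an elementary abelian $2$-group, and $V\leqslant R$.

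Passing to $\overline{G^*}=G^*/V$ now gives the contradiction. Here $\overline G=L$ and $\overline R$ are normal with $\overline G\cap\overline R=1$ and $\overline G\,\overline R=\overline{G^*}$, so $\overline{G^*}\cong L\times E$ with $E=\overline R$ a central $2$-group. Since $V\leqslant K$ is a $\pi$-separable normal subgroup, property~$(*)$ shows $\overline K=K/V$ is $\pi$-maximal in $\overline{G^*}$, while $\overline K\cap L=H/V$ is a Sylow $2$-subgroup of $L$. But in a direct product $L\times E$ with $E$ a central $\pi$-group, every $\pi$-maximal subgroup contains $E$ and has the form $Q\times E$ with $Q$ $\pi$-maximal in $L$; thus $H/V$ would be $\pi$-maximal in $L=\operatorname{GL}_3(2)$. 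This contradicts the fact, recalled in the introduction, that a Sylow $2$-subgroup of $\operatorname{GL}_3(2)$ is properly contained in the parabolic $\{2,3\}$-subgroups of order $2^3\cdot 3$ and so is not $\pi$-maximal. I expect the main obstacle to be exactly the structural step identifying $\overline{G^*}$ as $L\times E$; everything hinges on the coincidence $L=\operatorname{GL}(V)$, which forces the outer behaviour on $G/V$ to be inner and thereby strips $G^*$ down to $G$ times a $2$-group.
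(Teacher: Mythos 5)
Your proof is correct, and while its overall skeleton matches the paper's (argue by contradiction, invoke Proposition~\ref{main} to get $G\trianglelefteqslant G^*\leqslant\operatorname{Aut}(G)$, pass to $G^*/V$, and contradict the fact that a $2$-Sylow subgroup of $\operatorname{GL}_3(2)$ is not $\{2,3\}$-maximal), the central step is handled by a genuinely different method. The paper pins down $G^*$ concretely: it distinguishes the two possible isomorphism types of $G$ (the split extension $V\!:\!L$ and the nonsplit one, unique since $\dim H^2(L,V)=1$), and then uses a GAP computation to show that in both cases $\operatorname{Aut}(G)$ is $G$ extended by a single outer involution acting trivially on $V$ and on $L$, so that $G^*/V\cong L\times C_2$. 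You instead avoid any computation and any case split: from the coincidence $L=\operatorname{GL}(V)$ with $\operatorname{Z}(\operatorname{GL}(V))=1$ you deduce that every automorphism of $G$ induces an \emph{inner} automorphism of $L$, and that the kernel $R$ of the action of $G^*$ on $L$ acts trivially on $V$ as well, hence consists of cocycle-type automorphisms and is elementary abelian of exponent~$2$; this yields $G^*/V\cong L\times E$ for \emph{some} elementary abelian $2$-group $E$, which is all the final step needs (every $\pi$-maximal subgroup of $L\times E$ has the form $Q\times E$ with $Q$ $\pi$-maximal in $L$, forcing the Sylow $2$-subgroup $H/V$ of $L$ to be $\pi$-maximal in $L$, a contradiction). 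What each approach buys: the paper's gives the precise structure of $\operatorname{Aut}(G)$ (and needs the preliminary remark that $S$ is not $\pi$-maximal in $G$ to exclude $G^*=G$, a case your argument absorbs uniformly as $E=1$), at the cost of relying on machine computation and on the classification of the extensions; yours is self-contained, uniform over both extension types, and makes transparent exactly where the hypothesis that $L$ is the \emph{full} group $\operatorname{GL}(V)$ enters. One point you state without justification but should note is immediate: $V\leqslant K$ holds because $V$ is a normal $\pi$-subgroup of $G^*$ (characteristic in $G\trianglelefteqslant G^*$) and $K$ is $\pi$-maximal, which is what legitimizes forming $\overline{K}=K/V$ and the identity $\overline{K}\cap\overline{G}=\overline{K\cap G}$.
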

\begin{proof}
Let $S$ be a $2$-Sylow subgroup of $G$ and suppose that it is $\pi$-submaximal. Let $\overline{\phantom{a}}:G\to L$ be the canonical epimorphism. Observe that $S$ is not
$\pi$-maximal, because neither is the $2$-Sylow subgroup $\overline{S}$ of $L$, inasmuch as $\overline{S}$ is contained in a subgroup  $M\leqslant L$ of order $24$. Let
$G^*$ be a finite group of minimal order such that $G\trianglelefteqslant \trianglelefteqslant G^*$ and there is a $\pi$-maximal subgroup $K$ of $G^*$ with $S=G \cap K$.
By Proposition \ref{main}, $G\trianglelefteqslant G^*$ and and $C_{G^*}(G)=1$, i.\,e. $G^*\leqslant \operatorname{Aut}(G)$. There are only two possibilities for $G$: it
is either the natural semidirect product $V\!\!:\!L$, or the unique\footnote{The nonsplit extension $V^{\,\textstyle \cdot}L$ is well known \cite[p. 120]{09Wils}. It
occurs as a maximal subgroup of the simple Chevalley group $G_2(p)$ for an odd prime $p$. The uniqueness follows from the fact that $\dim H^2(L,V)=1$.} nonsplit
extension $V^{\,\textstyle \cdot}L$. It can be shown using \cite{GAP4} that in both cases $\operatorname{Aut}(G)$ is an extension of $G$ by an outer automorphism
$\alpha$ of order $2$ which acts trivially on both $V$ and $L$. Consequently, we have either $G^*=G$ or $G^*=\operatorname{Aut}(G)$. Since $S$ is not $\pi$-maximal in
$G$, we cannot have $G^*=G$. Hence, $G^*=\operatorname{Aut}(G)$.

We extend ``$\ \overline{\phantom{a}}\ $'' to the canonical epimorphism $G^*\to G^*/V\cong L\times C$, where $C$ is cyclic of order $2$ generated by the image of
$\alpha$. Since $K$ is $\pi$-maximal in $G^*$ and $V$ is a $\pi$-subgroup, it follows that $\overline K$ is $\pi$-maximal in $G^*/V$. Also, we have $\overline{K}\cong
\overline{S} \times C$ by assumption. But this subgroup is not $\pi$-maximal in $G^*/V$ as it is properly contained in the $\pi$-subgroup $M\times C$, a contradiction.
\end{proof}

The first example shows that the image of a $\pi$-submaximal subgroup under an epimorphism $\phi$ whose kernel is an abelian $\pi$-group is not $\pi$-submaximal in
$\operatorname{Im}\phi$.

\medskip

{\em Example 1.} As in Corollary \ref{crl}, let $V$ be the natural $3$-dimensional $\mathbb{F}_2L$-module for $L=\operatorname{GL}_3(2)$, and let $V^*$ be its
contragredient module. The inverse-transpose automorphism $\gamma$ of $L$ of order $2$ permutes $V$ and $V^*$ and so naturally acts on $V\oplus V^*$. Consequently, the
semidirect product $G=(V\oplus V^*)\!:\!L$ can be extended to $G^*=G\!:\!\langle \gamma \rangle$. The $2$-Sylow subgroup of $G^*$ is $\pi$-maximal, where $\pi=\{2,3\}$.
Hence, the $2$-Sylow subgroup $S$ of $G$ is $\pi$-submaximal. Let $\overline{\phantom{a}}:G\to G/V^*$ be the canonical epimorphism. Corollary \ref{crl} implies that
$\overline{S}$ is not $\pi$-submaximal in $\overline{G}$, because $\overline{G}$ is an upward extension of $V$ by $L$.

\medskip

The second example shows that, for a homomorphism $\phi$ whose kernel is an abelian $\pi$-group, a $\pi$-submaximal subgroup
in $\operatorname{Im}\phi$ is not the image of any $\pi$-submaximal subgroup. Moreover, this example implies that there exists
a minimal nonsolvable group $G$ such that a $\pi$-submaximal subgroup in the minimal
simple group $G/\Phi(G)$ is the image of no $\pi$-submaximal subgroup of $G$.

\medskip

{\em Example 2.} Again, let $V$ be the natural module for $L=\operatorname{GL}_3(2)$ and let $G$ be the nonsplit extension $V^{\,\textstyle \cdot}L$. Then $G$ is a
minimal nonsolvable group. A $2$-Sylow subgroup $H$ of $L$ is $\pi$-submaximal by \cite{18GuoRev}, where $\pi=\{2,3\}$. However, there is no $\pi$-submaximal subgroup of
$G$ whose image in $L$ under the canonical epimorphism $G\to L$ would equal $H$, because such a subgroup would necessarily be $2$-Sylow in $G$, but no $2$-Sylow subgroup
of $G$ is $\pi$-submaximal by Corollary~\ref{crl}.

\medskip

{\em Remark.} We know of examples where $G$ has a normal abelian $\pi'$-subgroup $A$ such that the image under the canonical epimorphism $G\to G/A$
of a $\pi$-submaximal subgroup of $G$ is not $\pi$-submaximal in $G/A$, and conversely a $\pi$-submaximal subgroup of $G/A$ is the image of no
$\pi$-submaximal subgroup of $G$. A justification of these examples requires an analog of Proposition \ref{main} for the case $p\not\in \pi$.
We have obtained a proof of this analog which, however, uses the entire form of Wielandt's claim \cite[5.4(a)]{80Wie}, and therefore
we postpone the publication of our examples until after a proof of this claim has appeared in print, see footnote on p.~\pageref{skr}.


\end{document}